\def\bl{\begin{lemma}}
\def\el{\end{lemma}}
\def\bth{\begin{theorem}}
\def\eth{\end{theorem}}
\def\bc{\begin{corollary}}
\def\ec{\end{corollary}}
\def\bcj{\begin{conjecture}}
\def\ecj{\end{conjecture}}
\def\bpr{\begin{proposition}}
\def\epr{\end{proposition}}
\def\bde{\begin{definition}}
\def\ede{\end{definition}}
\def\E{\mathbb{E}}
\newcommand{\be}{\begin{eqnarray}}
\newcommand{\ee}{\end{eqnarray}}
\newcommand{\eps}{{\mbox{$\epsilon$}}}
\newcommand{\Z}{{\mathbb Z}}
\renewcommand{\and}{\hbox{ {\rm and} }}
\newcommand{\C}{{\mathcal{C}}}
\newcommand{\prob}{\mbox{\bf P}}
\newcommand{\psrw}{\mbox{\bf p}_{\mbox{\rm srw}}}
\newcommand{\psaw}{\mbox{\bf p}_{\mbox{\rm saw}}}
\newcommand{\saw}{\hbox{{\rm SAW}}}
\newcommand{\pnbw}{\mbox{\bf p}_{\mbox{\rm nbw}}}
\newcommand{\lr}{\leftrightarrow}
\def\arrowfillCS#1#2#3#4{%
   \thickmuskip0mu\medmuskip\thickmuskip\thinmuskip\thickmuskip
   \relax#4#1\mkern-7mu%
   \cleaders\hbox{$#4\mkern-2mu#2\mkern-2mu$}\hfill
   \mkern-7mu#3
}
\def\lrfill{\arrowfillCS\leftarrow\relbar\rightarrow\relax}
\newtheorem{theorem}{Theorem}
\newtheorem{definition}{Definition}[section]
\newtheorem{lemma}{Lemma}[section]
\newtheorem{corollary}[lemma]{Corollary}
\newtheorem{proposition}[theorem]{Proposition}
\newtheorem{conjecture}[theorem]{Conjecture}
\theoremstyle{definition}
\numberwithin{equation}{section}
\begin{document}
\title{Non-amenable Cayley graphs of high girth have $p_c<p_u$ and mean-field exponents}
\author{Asaf Nachmias} \author{Yuval Peres}

\begin{abstract} In this note we show that percolation on non-amenable Cayley graphs of high girth has a phase of non-uniqueness, i.e., $p_c< p_u$. Furthermore, we show that percolation and self-avoiding walk on such graphs have mean-field critical exponents. In particular, the self-avoiding walk has positive speed.
\end{abstract}

\maketitle
\section{{\bf  Introduction}} One of the most well known conjectures in percolation theory, due to Benjamini and Schramm \cite{BS}, is that $p_c < p_u$ on any non-amenable Cayley graph. In other words, that any non-amenable Cayley graph exhibits a phase of percolation in which infinitely many infinite components exist with positive probability. In this note we show this holds under the additional assumption of high girth.

\begin{theorem} \label{percthm} For any $\rho < 1$ there exists $L>0$ such that if $G$ is a transitive graph with spectral radius at most $\rho$ and girth at least $L$, then
$$ p_c(G) < p_u(G) \, .$$
\end{theorem}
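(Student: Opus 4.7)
The plan is to locate thresholds $p_1<p_2$ in $(0,1)$ with $p_c(G)\le p_1$ and $p_u(G)\ge p_2$. Write $d$ for the common degree of $G$; the condition $\rho<1$ forces $d\ge 3$, and we aim to place both thresholds near $1/(d-1)$, the critical probability of the $d$-regular tree $T_d$.

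\textbf{Upper bound on $p_c$.} Set $p_1=(1+\delta)/(d-1)$ for a small $\delta>0$ to be chosen. Because $G$ has girth at least $L$, any ball $B_G(v,L/2)$ is isomorphic to the corresponding ball in $T_d$, so the breadth-first exploration of the $p_1$-cluster of $o$ in $G$, restricted to depth $L/2$, is distributed exactly as on $T_d$, namely a Galton--Watson process with offspring distribution $\mathrm{Bin}(d-1,p_1)$. Since $p_1(d-1)=1+\delta>1$ this process is supercritical, and with probability bounded below in $L$ the exploration reaches depth $L/2$ with at least $(1+\delta/2)^{L/2}$ leaves. A standard block renormalization argument (declare a vertex ``good'' when its cluster reaches depth $L/2$ with many leaves, then couple the good set with a supercritical site percolation on a coarsening of $G$) converts this into an infinite cluster in $G$, yielding $p_c(G)\le p_1$.

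\textbf{Lower bound on $p_u$.} Choose $p_2\in(p_1,1)$ and show that $\tau_{p_2}(x,y):=\Pr_{p_2}(x\lr y)$ tends to $0$ as $d(x,y)\to\infty$. This forces non-uniqueness, because a unique infinite cluster $\C_\infty$ at $p_2$ would force $\tau_{p_2}(x,y)\ge\theta(p_2)^2>0$ uniformly, contradicting the decay. To bound $\tau_p$ use the first-moment estimate
\[
\tau_p(x,y)\;\le\;\sum_{\gamma\colon x\to y}p^{|\gamma|}
\]
over self-avoiding paths in $G$. The girth lets us replace simple-path counts of length at most $L/2$ by non-backtracking walk counts; the Ihara--Bass formula expresses the growth rate $\lambda=\lambda(\rho,d)$ of non-backtracking walks in terms of the adjacency spectral radius and gives $\lambda<d-1$ strictly whenever $\rho<1$. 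Summing in blocks of length $L/2$ yields $\tau_p(x,y)\le C(p\lambda)^{d(x,y)}$ whenever $p<1/\lambda$. Since $\lambda<d-1$, choosing $\delta$ with $(1+\delta)\lambda<d-1$ places $p_1<1/\lambda$, and any $p_2\in(p_1,1/\lambda)$ does the job.

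\textbf{Main obstacle.} The lower bound on $p_u$ is the crux. The girth assumption enters twice: first to identify short self-avoiding paths with non-backtracking walks, and second to justify breaking long paths into short segments on which the non-backtracking bound is effective. Non-amenability enters through the strict inequality $\lambda<d-1$, a quantitative statement that a non-amenable $d$-regular graph has strictly slower non-backtracking growth than $T_d$. Fitting all constants so that a single $L=L(\rho)$ simultaneously validates both the block argument of Step~1 and the walk-count estimate of Step~2 for a common $\delta>0$ is the technical heart of the proof.
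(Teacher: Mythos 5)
Your Step 2 is essentially the paper's argument: bound $\Pr_p(x\lr y)$ by counting self-avoiding paths, control the count by non-backtracking walks and the spectral radius to get decay $C[p(d-1)\rho]^{d(x,y)}$ for $p(d-1)\rho<1$, and then use Harris/FKG plus the $0$--$1$--$\infty$ law for the number of infinite clusters to rule out uniqueness. Two small remarks there: you do not need the girth or any block decomposition for this part, since a self-avoiding path is automatically non-backtracking on any graph, and the pointwise bound $\pnbw^n(x,y)\le \rho^n/(1-\rho)$ holds for every $n$ directly (Lemma \ref{nbwradius}); the girth enters only to make the window $(p_c,\,1/((d-1)\rho))$ nonempty.

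The genuine gap is Step 1. The statement you need there --- for every $\rho<1$ and suitable $\delta>0$ there is $L$ so that girth $\ge L$ and spectral radius $\le\rho$ force $p_c\le(1+\delta)/(d-1)$ --- is precisely the main theorem of \cite{BNP} (Theorem \ref{bnp} here), a substantive result whose proof uses the spectral radius in an essential way to control collisions between the branches of the exploration once it leaves the tree-like ball. Your sketch uses only the girth: the Galton--Watson comparison is fine up to depth $L/2$, but the ``standard block renormalization'' that is supposed to finish the job does not exist as stated. The ``good'' events define a dependent site percolation on a coarsened graph whose critical probability is not controlled by anything in your hypotheses --- if $G$ is amenable the coarsening can have site-percolation threshold arbitrarily close to $1$, and nothing in your Step 1 excludes this, since non-amenability is never invoked. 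Indeed, the assertion that high girth alone pins $p_c$ near $1/(d-1)$ is (a special case of) the locality conjecture and cannot follow from such an argument. So the crux is not ``fitting the constants,'' as you suggest at the end, but the upper bound on $p_c$ itself; you should either import Theorem \ref{bnp} as the paper does or supply an argument that actually uses $\rho<1$ at that stage.
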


Our technique allows us to study the self-avoiding walk in the same setting. We remark that it is somewhat surprising that the analysis of this model relies on our percolation inequality Theorem \ref{bnp}. Recall that the self-avoiding walk of length $n$ on a graph $G$ is the uniform measure on simple paths (no vertex is visited more than once) of length $n$ starting at the origin. It is one of the easiest models to describe in statistical physics, yet is notoriously difficult to analyze or sample due to the lack of Markovian structure (see \cite{MS, BDcGS} for further details). We write $\saw(n)$ for the endpoint of the walk.

\begin{theorem} \label{sawthm}  For any $\rho < 1$ there exists $L>0$ such that if $G$ is a transitive graph with spectral radius at most $\rho$ and girth at least $L$, then there exists a constant $c>0$ such that for any vertex $x$
$$ \prob(\saw(n)=x)) \leq e^{-cn} \, .$$
Consequently, the self-avoiding walk has positive speed, that is, there exists some $c>0$ such that
$$ \prob\big ( d_{G}(0,\saw(n)) \leq c n \big ) \to 0 \, ,$$
where $d_G(x,y)$ denotes the graph distance in $G$ between $x$ and $y$.
\end{theorem}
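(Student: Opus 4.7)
The positive-speed conclusion follows from the first (endpoint) bound by a union bound over the ball of radius $cn$ around $0$: since $G$ is $D$-regular for some fixed $D$, this ball has at most $D^{cn}$ vertices, and hence
\[
\prob(d_G(0,\saw(n))\le cn) \;\le\; D^{cn}\cdot\max_x\prob(\saw(n)=x) \;\le\; D^{cn}e^{-c_0 n},
\]
which tends to zero whenever $c<c_0/\log D$. The substantive part of the theorem is therefore the endpoint bound, and I focus on that.

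Write $\sigma_n(x)$ for the number of length-$n$ SAWs from the origin to $x$ and $c_n=\sum_x\sigma_n(x)$. Submultiplicativity of $(c_n)$ together with Fekete's lemma gives $c_n\ge \mu^n$, where $\mu=\lim_n c_n^{1/n}$ is the connective constant of $G$. Since $\prob(\saw(n)=x)=\sigma_n(x)/c_n$, the first conclusion is equivalent to a \emph{mass gap} of the form $\sigma_n(x)\le C\mu^n e^{-\delta n}$ uniformly in $x$, for some $\delta>0$. In other words, the SAW Green function $G_z^{\saw}(x)=\sum_n \sigma_n(x)z^n$ must extend analytically, uniformly in $x$, slightly past its radius of convergence $\mu^{-1}$.

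The plan is to obtain this mass gap from Theorem~\ref{bnp}. The link between the two models is the classical union bound $\prob_p(0\lr x)\le G_p^{\saw}(x)$, obtained by summing $p^n$ over SAWs from $0$ to $x$. The high-girth hypothesis enters in two places. First, it forces $\mu$ to be close to $D-1$: for $n<L/2$, SAWs coincide with non-backtracking walks, of which there are exactly $D(D-1)^{n-1}$, so the spectral-radius hypothesis $\rho<1$ can be combined with NBW bounds to produce a workable lower bound on $\mu$ and a matching upper bound on $\sigma_n(x)$ in the short-time regime. Second, on such graphs the SAW and percolation generating functions should be quantitatively close near their respective critical points, so that the mean-field-type control provided by Theorem~\ref{bnp} on the percolation two-point function transfers to $G_z^{\saw}(x)$, giving the SAW bubble condition $\sum_x G_{\mu^{-1}}^{\saw}(x)^2<\infty$; from this, mean-field exponents for SAW and in particular the required mass gap follow by standard differential-inequality arguments.

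The main obstacle is the quantitative transfer of the percolation inequality Theorem~\ref{bnp} to the SAW setting. The easy direction of the SAW/percolation comparison is a triviality, but controlling the reverse direction tightly enough to push mean-field percolation bounds onto $G_z^{\saw}$ is delicate, and this is precisely where the high-girth assumption is used essentially: short cycles are the only mechanism that cause the union-bound comparison to be significantly lossy, and ruling them out permits the transfer. I expect the bulk of the technical work to lie in this comparison.
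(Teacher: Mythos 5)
Your reduction of positive speed to the uniform endpoint bound (union bound over the ball of radius $cn$, which has at most $D^{cn}$ vertices) is exactly the paper's argument for the second assertion. The gap is in your route to the endpoint bound itself. You propose to transfer the percolation control of Theorem \ref{bnp} to the SAW two-point function, deduce the bubble condition $\sum_x G_{\mu^{-1}}^2(x)<\infty$, and then extract the uniform mass gap ``by standard differential-inequality arguments.'' This fails at two points. First, the bubble condition does not yield a pointwise mass gap: it gives $\chi(z)\asymp(\mu^{-1}-z)^{-1}$ and hence $\gamma=1$, but it says nothing about $c_n(x)$ uniformly in $x$; indeed the authors explicitly remark that even $c_n=O(\mu^n)$ is out of reach in this setting, so no known consequence of the bubble condition delivers $c_n(x)\le C\mu^n e^{-\delta n}$. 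Second, the ``reverse direction'' of the SAW/percolation comparison that you yourself flag as the technical heart --- lower-bounding the SAW generating function by percolation connection probabilities tightly enough to import mean-field control --- is not established anywhere and is not how Theorem \ref{bnp} enters. A related slip: counting SAWs of length $n<L/2$ (where they coincide with non-backtracking walks) bounds $c_n^{1/n}$ from \emph{below} only for those finitely many $n$, and since $\mu=\inf_n c_n^{1/n}$ this produces an \emph{upper} bound on $\mu$ near $d-1$, not the lower bound your argument needs.

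The actual proof is far more elementary, and you have every ingredient on the table without assembling them. Theorem \ref{bnp} with $L$ large gives $p_c(d-1)\rho<1$, which is condition (\ref{perccond}). The union bound over self-avoiding paths that you call ``the link between the two models,'' applied in the form $\prob_p(0\lr S_n)\le\sum_{k\ge n}c_kp^k$, gives the classical inequality $\mu p_c\ge1$ on any transitive graph; combined with (\ref{perccond}) this yields $\mu\ge p_c^{-1}>(d-1)\rho$, which is the missing lower bound on $\mu$ --- it comes from percolation, not from girth. On the other side, every SAW is a non-backtracking walk, so $c_n(x)\le d(d-1)^{n-1}\pnbw^n(0,x)\le C[(d-1)\rho]^n$ by Lemma \ref{nbwradius}. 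Dividing by $c_n\ge\mu^n$ gives $c_n(x)/c_n\le C[(d-1)\rho/\mu]^n$, which decays exponentially since $(d-1)\rho<\mu$. No bubble condition, no two-point function transfer, and no differential inequalities are required.
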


Our results are similar in spirit to those of Schonmann \cite{Sc1} with one significant difference: Schonmann's results require that the spectral radius be small, while here we require the girth to be large. This allows us to apply the results for graphs in which the ratio of the Cheeger constant and the degree may be smaller than $2^{-1/2}$.
For example, Olshanskii and Sapir \cite{OS} and Akhmedov \cite{A} constructed Cayley graphs $G_n$ with girth going to $\infty$ and Cheeger constant uniformly bounded away from $0$, but this uniform bound may be arbitrarily close to $0$.

\subsection{Background} Given a graph $G$, two vertices $x,y$ of $G$ and an integer $n\geq 0$ we write $\psrw^n(x,y)$ for the probability that the simple random walk starting at $x$ visits $y$ at time $n$. Recall that the {\em spectral radius} $\rho\in[0,1]$ of a graph $G$ is defined by $\rho = \lim_{n \to \infty} (\psrw^{2n}(0,0))^{1/2n}$ (this limit always exists, see \cite{Woess}) and that the girth of $G$ is the length of the shortest cycle. A graph is said to be {\em non-amenable} if $\rho < 1$.

Given an infinite connected graph $G$ and $p\in[0,1]$ we define $p$-bond percolation to be the probability measure $\prob_p$ on subgraphs of $G$ obtained by independently deleting each edge with probability $1-p$ and retaining it otherwise. We call the retained edges {\em open} and the deleted edges {\em closed}. We say that two vertices $x$ and $y$ are connected if there exists a path of open edges in $G$ connecting $x$ and $y$ and denote this event by $x \lr y$. The connected component of $x$, denoted by $\C(x)$, is the set $\{y: x \lr y\}$. We define the critical percolation probability $p_c$ by
$$ p_c = \inf \big \{ p \in [0,1]\, : \, \prob_p (\exists \hbox{ {\rm an infinite connected component}} ) > 0 \big \} \, ,$$
and the uniqueness critical probability $p_u$ by
$$ p_u = \inf \big \{ p \in [0,1]\, : \, \prob_p (\exists \hbox{ {\rm  a unique infinite connected component}}) > 0 \big \} \, .$$

A beautiful argument due to Burton and Keane \cite{BK} shows that $p_c=p_u$ on any amenable transitive graph $G$. Benjamini and Schramm \cite{BS} conjectured that $p_c < p_u$ on any non-amenable Cayley graph. Pak and Smirnova-Nagnibeda \cite{PS} showed that for any non-amenable finitely generated group there exists a set of generators for which the resulting Cayley graph has $p_c < p_u$. Schonmann \cite{Sc1} showed that $p_c<p_u$ for Cayley graphs in which the ratio between the Cheeger constant and the degree is at least $2^{-1/2}$ and also for non-amenable Cayley graphs with more than one end (there $p_u=1$). Benjamini and Schramm \cite{BS2} showed that $p_c<p_u$ for transitive non-amenable planar graphs. We refer the reader to \cite{HJ} for further details.

As for the self-avoiding walk, Madras and Wu \cite{MW} showed that on some regular tilings of the hyperbolic plane the self-avoiding walk has positive speed. Duminil-Copin and Hammond \cite{DcH} show that the speed is zero on $\Z^d$ for any $d\geq 2$ and Madras \cite{M} gave a lower bound on the expected displacement of the self-avoiding walk on $\Z^d$. We expect that the statement of Theorem \ref{sawthm} holds for any non-amenable Cayley graph (see Question 5 of \cite{DcH}).



\subsection{Critical exponents} In addition, we show that percolation and the self-avoiding walk attain mean-field critical exponents on non-amenable graph of high girth. These exponents describe the behavior of the system at and near the critical point. Let us define the percolation critical exponents $\beta, \gamma$ and $\delta$, bearing in mind that in general there is no proof that they exist. See \cite{G} for further information.
\begin{align*}
\prob_p ( |\C(0)|=\infty ) & \asymp (p-p_c)^\beta  \, , & & p > p_c \\
\E _p |\C(0)| & \asymp (p-p_c)^{-\gamma}  \, , & & p < p_c \\
\prob_{p_c} ( |\C(0)| \geq n ) & \asymp n^{-1/\delta} \, ,
\end{align*}
where the symbol $\asymp$ implies that the ratio of both sides is bounded above and below away from $\infty$ and $0$. We say that a transitive graph $G$ satisfies the {\em triangle condition} at $p$  (which is usually $p_c$) if
$$ \sum_{x,y} \prob_p(0 \lr x) \prob_p(x\lr y) \prob_p(y \lr 0) < \infty \, .$$
Results in this area are usually of two types: proving that the triangle condition holds at $p_c$, and showing that graphs satisfying the condition have ``mean-field'' exponents, in particular $\beta=\gamma=1$ and $\delta=2$ which is the case for regular trees. Given a locally finite graph $G$, let $\Gamma$ be its group of automorphisms and denote by $S(x) = \{ \gamma \in \Gamma : \gamma x = x\}$ the stabilizer of $x$. We say a graph {\em unimodular} if for any pair of vertices $x,y$ we have $|S(x)y| = |S(y)x|$, see Chapter 8 of \cite{LP} for further details. In particular, any Cayley graph is unimodular. In the combined works of Aizenman, Barsky and Newman \cite{AB,AN,BA} it is shown that the triangle condition implies the graph has mean-field exponents when $G$ is a unimodular transitive graph (they proved it for $\Z^d$, but the proof works in the generality of unimodular transitive graphs, see the discussion around (3.14) in \cite{Sc1}). Here we show that the triangle condition holds for non-amenable graphs of high girth.

\begin{theorem} \label{percolationexponents} For any $\rho < 1$ there exists $L>0$ such that if $G$ is a regular graph with spectral radius at most $\rho$ and girth at least $L$, then the percolation triangle condition on $G$ holds at $p_c$. Hence, if $G$ is a transitive unimodular graph, then the critical exponents $\beta, \gamma, \delta$ exist with $\beta=\gamma=1$ and $\delta=2$.
\end{theorem}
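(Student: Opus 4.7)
The second conclusion—existence of the critical exponents with the mean-field values $\beta=\gamma=1$ and $\delta=2$—follows from the triangle condition at $p_c$ by the classical results of Aizenman-Barsky-Newman cited in the paper, extended to unimodular transitive graphs as noted there. Thus the real task is to verify
\begin{equation*}
\nabla(p_c) := \sum_{x,y}\prob_{p_c}(0\lr x)\,\prob_{p_c}(x\lr y)\,\prob_{p_c}(y\lr 0)<\infty.
\end{equation*}

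My plan is to derive a sufficiently strong exponential decay of the two-point function $\tau_{p_c}(0,x):=\prob_{p_c}(0\lr x)$ in the graph distance $d_G(0,x)$, and to reduce the triangle to a tree-like geometric computation using the girth hypothesis. First I would invoke the author's percolation inequality (Theorem \ref{bnp}) to compare $\tau_{p_c}(0,x)$ to a $p_c$-weighted count of non-backtracking walks from $0$ to $x$. On a $d$-regular graph of girth at least $L$, any non-backtracking walk of length less than $L/2$ is the unique geodesic between its endpoints, so at short range the count matches the $d$-regular tree; beyond this range, the spectral radius bound $\rho<1$, via an Ihara-type comparison between the SRW and NBW spectral radii, forces the number of non-backtracking walks from $0$ of length $n$ to grow at rate at most $(\sqrt{d-1}+o_L(1))^n$, again close to the tree value. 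Choosing $L$ large as a function of $\rho$, this yields a bound $\tau_{p_c}(0,x)\leq C\lambda^{d_G(0,x)}$ with $\lambda^2(d-1)<1$.

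Given such exponential decay, the triangle condition becomes a tree-like computation. Writing $\nabla(p_c)=\sum_x \tau_{p_c}(0,x)(\tau_{p_c}\ast\tau_{p_c})(0,x)$, I would bound the convolution by its value on the $d$-regular tree, which a direct computation shows decays as $n\,\lambda^n$ in the endpoint distance $n=d_G(0,x)$. The outer sum then becomes a sum of the form $\sum_n V_n\, n\lambda^{2n}$, where $V_n\leq d(d-1)^{n-1}$ is the volume growth. The condition $\lambda^2(d-1)<1$ makes this geometric series convergent. As before, the girth hypothesis ensures the tree estimates are valid up to scale $L/2$, while the spectral radius controls contributions beyond that scale.

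The main obstacle is the quantitative calibration of $L$ in terms of $\rho$ so that the effective decay rate $\lambda$ of the two-point function satisfies $\lambda^2(d-1)<1$. This is subtle because $p_c$ is an intrinsic quantity of $G$, not a free parameter, so Theorem \ref{bnp} must deliver an upper bound on $p_c$ that is itself small enough to force $\lambda$ below the required threshold. Balancing the tree-like short-range behavior (controlled by girth) against the long-range tail (controlled by spectral radius) is precisely what Theorem \ref{bnp} is designed to accomplish, which also explains its unifying role in Theorems \ref{percthm} and \ref{sawthm}.
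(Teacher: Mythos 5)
Your derivation of the exponents from the triangle condition via Aizenman--Barsky--Newman matches the paper, but your proof of the triangle condition itself has a quantitative gap that cannot be repaired within the strategy you describe. Reducing to exponential decay of the two-point function in the graph distance, the best rate extractable from non-backtracking path counts is $\prob_{p_c}(0\lr x)\leq C[p_c(d-1)\rho]^{d_G(0,x)}$, i.e.\ $\lambda=p_c(d-1)\rho$; this is exactly the quantity that Theorem \ref{bnp} is calibrated to push below $1$, and it is what the paper uses for Theorem \ref{percthm}. But your convergence criterion $\sum_n V_n\, n\lambda^{2n}<\infty$ with $V_n\leq d(d-1)^{n-1}$ requires $\lambda^2(d-1)<1$, i.e.\ $\lambda<(d-1)^{-1/2}$, and this is never achievable: since $p_c\geq(d-1)^{-1}$ on any $d$-regular graph and the spectral radius of any $d$-regular graph is at least the tree value $2\sqrt{d-1}/d$, one has
$$ \lambda \;=\; p_c(d-1)\rho \;\geq\; \rho \;\geq\; \frac{2\sqrt{d-1}}{d} \;>\; \frac{1}{\sqrt{d-1}} \qquad \hbox{ {\rm for all} } d\geq 3 \, ,$$
so your geometric series diverges for every admissible graph. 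Driving $\lambda$ down toward the tree value $(d-1)^{-1}$ would require showing that connections are dominated by near-geodesic open paths, which is far beyond the path-counting estimates available here. (A secondary inaccuracy: the number of non-backtracking walks of length $n$ from a fixed vertex is exactly $d(d-1)^{n-1}$, so it grows at rate $d-1$, not $\sqrt{d-1}+o_L(1)$; only the point-to-point counts are damped by $\rho$.)

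The idea you are missing is that one should not reduce to a bound in the graph distance at all. The paper substitutes the kernel bound $\prob_{p_c}(x \stackrel{n}{\lrfill} y)\leq d(d-1)^{n-1}\pnbw^n(x,y)\,p_c^n$ into all three factors of the triangle diagram and keeps the dependence on the intermediate vertices; the double sum over $x,y$ of the product of three walk kernels then collapses by Chapman--Kolmogorov (after passing from $\pnbw$ to $\psrw$ via Lemma \ref{nbwvssrw}),
$$ \sum_{x,y}\psrw^{n_1}(0,x)\,\psrw^{n_2}(x,y)\,\psrw^{n_3}(y,0)=\psrw^{n_1+n_2+n_3}(0,0)\leq\rho^{n_1+n_2+n_3} \, ,$$
after which the triangle is bounded by $C\sum_{r_1,r_2,r_3}[p_c(d-1)\rho]^{r_1+r_2+r_3}$, convergent under the single condition $p_c(d-1)\rho<1$ supplied by Theorem \ref{bnp}. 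This convolution step is precisely where one gains over the sphere-by-sphere volume count, and it also renders the short-range tree comparison in your sketch unnecessary: the girth enters only through Theorem \ref{bnp}, not through any local tree-likeness of the geometry.
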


Write $c_n$ for the number of self-avoiding paths of length $n$ starting at at the origin. Recall that the sequence $c_n$ is submultiplicative (see \cite{BDcGS}) hence the limit $\lim _{n \to \infty} n^{-1} \log c_n$ exists and equals $\inf_n n^{-1} \log c_n$. This number is commonly denoted by $\mu$. We also write $c_n(x)$ for the number of self-avoiding paths of length $n$ starting at the origin and ending at $x$, so $\psaw(0,x) = c_n(x)/c_n$ is the law of the location of the self-avoiding walk after $n$ steps. Write SAW$(n)$ for a random vertex distributed according to this law. The critical exponents $\gamma$ and $\nu$ associated with the self-avoiding walk are defined (as before, only when they exist) by:
\begin{align*}
\gamma = \lim_{n\to \infty} {\log c_n \mu^{-n} \over \log n} + 1 \, , \qquad \E [d_G(0, \hbox{{\rm SAW}}(n))] \asymp n^\nu \, .
\end{align*}

For $z \in [0,\mu^{-1})$ we define the sums
$$ G_z(x) = \sum_{n \geq 0} c_n(x) z^n \quad \and \quad \chi(z) = \sum_n c_n z^n = \sum_x G_z(x) \, .$$
Since $\lim c_n^{1/n} = \mu$ it is clear that both series converge and that $\mu^{-1}$ is the radius of convergence for $\chi(z)$. We say that a graph $G$ satisfies the self-avoiding walk {\em bubble condition} if
$$\lim_{z \to \mu^{-1}} \sum_{x \in G} G_{z}^2(x) < \infty \, .$$
The bubble condition for the self-avoiding walk is the analogue of the triangle condition. It was proven to hold for the integer lattice $\Z^d$ when $d\geq 5$ using the lace expansion by the seminal works of Brydges and Spencer \cite{BS} and Hara and Slade \cite{HaS92}. It is a useful condition since for any transitive graph it implies that
$$ \chi(z) \asymp {1 \over z - \mu^{-1}} \, ,$$
(see section 4.2 of \cite{BDcGS} or \cite{MS} for this implication --- there the proofs are for $\Z^d$ but a closer inspection shows that they only use transitivity). A standard Tauberian theorem (e.g., Lemma 6.3.3 in \cite{MS}) now implies that $\gamma=1$. 

Theorem \ref{sawthm} shows that $\nu=1$ in the setting of non-amenable graphs with high girth. Here we additionally show that the bubble condition holds as well.
\begin{theorem} \label{sawexponents} For any $\rho < 1$ there exists $L>0$ such that if $G$ is a transitive graph with spectral radius at most $\rho$ and girth at least $L$, then the self-avoiding walk bubble condition holds, whence $\gamma=1$.
\end{theorem}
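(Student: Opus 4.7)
The plan is to read off the bubble condition directly from Theorem \ref{sawthm}. The key input is the pointwise estimate $\prob(\saw(n)=x) \le e^{-cn}$, which, together with the identity $\prob(\saw(n)=x) = c_n(x)/c_n$, rewrites as the uniform bound
\[
c_n(x) \;\le\; c_n\, e^{-cn} \qquad \text{for all } x \in G,\ n \ge 0.
\]

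First I would expand the bubble into a double sum:
\[
\sum_{x \in G} G_z(x)^2 \;=\; \sum_{n,m \ge 0} z^{n+m} \sum_{x \in G} c_n(x)\,c_m(x).
\]
To control the inner sum, I apply the pointwise bound to one factor and the transitivity identity $\sum_{x \in G} c_m(x) = c_m$ to the other, giving
\[
\sum_{x \in G} c_n(x)\,c_m(x) \;\le\; c_n e^{-cn}\cdot c_m,
\]
and symmetrically $\le c_m e^{-cm}\cdot c_n$; keeping the better of the two,
\[
\sum_{x \in G} c_n(x)\,c_m(x) \;\le\; c_n c_m\, e^{-c\max(n,m)}.
\]

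Next, I use the submultiplicative bound $c_n \le \mu^n$ so that $c_n z^n \le 1$ for every $z \in [0,\mu^{-1}]$. This yields a uniform-in-$z$ estimate
\[
\sum_{x \in G} G_z(x)^2 \;\le\; \sum_{n,m \ge 0} (c_n z^n)(c_m z^m)\, e^{-c\max(n,m)} \;\le\; \sum_{n,m \ge 0} e^{-c\max(n,m)},
\]
and the final double sum is a finite constant (splitting on $\{n \le m\}$ versus $\{m < n\}$ gives $\sum_{k \ge 0}(2k+1)e^{-ck} < \infty$). This is exactly the bubble condition, and letting $z \uparrow \mu^{-1}$ is harmless since the upper bound is independent of $z$. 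The identity $\gamma = 1$ then follows from the Tauberian implication already cited in the paper, valid on any transitive graph.

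I do not expect a genuine obstacle here: the substantive work has been absorbed into Theorem \ref{sawthm}, and Theorem \ref{sawexponents} is essentially a short corollary that only combines pointwise exponential decay with the transitivity identity $\sum_x c_n(x) = c_n$ and the crude bound $c_n \le \mu^n$. The only routine check is the legitimacy of interchanging summations, which is immediate because all terms are non-negative.
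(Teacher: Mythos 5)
There is a genuine gap at the step ``I use the submultiplicative bound $c_n \le \mu^n$.'' Submultiplicativity of $c_n$ gives, via Fekete's lemma applied to the subadditive sequence $\log c_n$, that $\lim_n n^{-1}\log c_n = \inf_n n^{-1}\log c_n = \log\mu$, and hence $c_n \ge \mu^n$ for \emph{every} $n$ --- the inequality you need points the wrong way. (Already $c_1 = d > d-1 \ge \mu$ on any $d$-regular graph, and on the $d$-regular tree $c_n = \tfrac{d}{d-1}\mu^n > \mu^n$ for all $n$.) The paper is explicit that this is not a technicality: the remark after Theorem \ref{sawexponents} states that the authors were unable to establish even $c_n = O(\mu^n)$ in this setting. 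So the claim $c_n z^n \le 1$ at $z=\mu^{-1}$, on which your final estimate rests, is unavailable.

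The good news is that your strategy survives a small repair. From $c_n^{1/n}\to\mu$ you do get $c_n \le C_\eps(\mu+\eps)^n$ for every $\eps>0$, whence $c_n z^n \le C_\eps(1+\eps/\mu)^n$ for $z\le\mu^{-1}$. Combining this with your (correct) bound $\sum_x c_n(x)c_m(x) \le c_nc_m e^{-c\max(n,m)}$ and $e^{-c\max(n,m)}\le e^{-c(n+m)/2}$ yields
\[
\sum_{x\in G} G_z^2(x) \;\le\; C_\eps^2 \sum_{n,m\ge 0}\bigl[(1+\eps/\mu)e^{-c/2}\bigr]^{n+m},
\]
which converges once $\eps$ is chosen small enough that $(1+\eps/\mu)e^{-c/2}<1$; this is possible since $c>0$ is fixed by Theorem \ref{sawthm}. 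With that modification your argument is a legitimate alternative to the paper's proof, which sidesteps the issue entirely: there both factors $c_n(x)$ are bounded by $d(d-1)^{n-1}\pnbw^n(0,x)$, the convolution identity $\sum_x \psrw^{n_1}(0,x)\psrw^{m_1}(x,0)=\psrw^{n_1+m_1}(0,0)\le\rho^{n_1+m_1}$ collapses the spatial sum, and convergence needs only $\mu^{-1}(d-1)\rho\le p_c(d-1)\rho<1$, i.e.\ condition (\ref{perccond}) together with $\mu p_c\ge 1$, with no reference to the size of $c_n$ itself.
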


\noindent {\em Remark.} We were not able to establish that $c_n=O( \mu ^n)$ in this setting. An estimate like that is known in $\Z^d$ but requires much more precise asymptotics on $\chi(z)$ as $z \to \mu^{-1}$ which are unavailable to us.

\section{Proofs}

The starting point of our proofs is the main result of \cite{BNP}
\begin{theorem}[Theorem 1 of \cite{BNP}]\label{bnp}  There exists a universal constant $C>0$ such that if $G$ is a non-amenable regular graph with degree $d$, girth $g$ and spectral radius $\rho < 1$, then
$$ p_c(G) \leq {1 \over d-1} + {C \log (1+ (1-\rho)^{-2}) \over dg} \, .$$
\end{theorem}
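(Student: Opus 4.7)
The plan is to show that for a sufficiently large universal constant $C$ and
$$ p \;=\; \frac{1}{d-1} + \frac{C\log(1+(1-\rho)^{-2})}{dg}, $$
the cluster of the origin is infinite with positive probability at parameter $p$. The proof rests on two ingredients: a tree-like breadth-first exploration valid up to depth $g/2$, and a quantitative expansion argument driven by the spectral bound.

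First, I would explore $\C(0)$ by breadth-first search from the origin. Because $G$ has girth $g$, any ball of radius strictly less than $g/2$ is a tree, so up to that depth the BFS is exactly a Galton--Watson process with offspring distribution $\mathrm{Bin}(d-1,p)$ (after the root step). The mean offspring is
$$ (d-1)p \;=\; 1 + \tfrac{C(d-1)\log(1+(1-\rho)^{-2})}{dg} \;\geq\; 1 + \tfrac{(C/2)\log(1+(1-\rho)^{-2})}{g}, $$
which is strongly supercritical over $g/2$ generations. Standard supercritical Galton--Watson estimates then give that, with positive probability, the $\lfloor g/2\rfloor$-th generation has size at least $(1+(1-\rho)^{-2})^{C/5}$, exceeding any prescribed polynomial in $(1-\rho)^{-1}$ once $C$ is large.

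Next, I would prove a quantitative expansion lemma: on a non-amenable regular graph with spectral radius $\rho$, any ``seed'' $S\subseteq\C(0)$ whose size is at least a fixed polynomial in $(1-\rho)^{-1}$ continues to an infinite cluster at parameter $p$ with positive probability. This is a quantitative strengthening of the classical $p_c<1$ argument on non-amenable graphs, based on the Cheeger-type inequality $h\gtrsim d(1-\rho)$. Concretely, I would iterate the BFS in further sweeps of depth $g/2$, each of which is locally tree-like from every seed vertex, using a second-moment/$\ell^2$ spectral estimate to absorb the correlations introduced by short cycles across distinct starting vertices (this is the source of the exponent $2$ in $(1-\rho)^{-2}$). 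Combining this with the output of the first step yields $\prob_p(|\C(0)|=\infty)>0$, hence $p_c(G)\leq p$.

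The main obstacle is making the quantitative expansion in the second step sharp enough to yield the logarithmic dependence $\log(1+(1-\rho)^{-2})$ rather than a worse function of $1-\rho$; this requires controlling the loss from cycles beyond depth $g/2$ very carefully, essentially by comparing the BFS on $G$ with that on its universal cover (a $d$-regular tree) through a spectral $\ell^2$ estimate, and then running a Peierls-type bootstrap whose failure probability is summable thanks to the spectral gap.
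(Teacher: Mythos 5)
This theorem is not proved in the paper at all: it is imported verbatim as Theorem 1 of \cite{BNP}, so your attempt has to be measured against the argument given there. Your first step is fine and does match the start of that argument: by the girth hypothesis the ball of radius $g/2$ is a tree, the cluster restricted to it is a Galton--Watson process with mean offspring $(d-1)p = 1+\eps$ where $\eps \asymp \log(1+(1-\rho)^{-2})/g$, and with positive probability level $g/2$ is a large power of $1+(1-\rho)^{-2}$. The genuine gap is your second step, which is where the whole content of the theorem lies, and the one concrete mechanism you cite for it cannot work. Every $d$-regular graph has edge Cheeger constant $h \le d-2$ (the $d$-regular tree is extremal), so the Benjamini--Schramm threshold satisfies $1/(1+h) \ge 1/(d-1)$; for any graph with $h<d-2$ your $p$ lies \emph{below} $1/(1+h)$, and Cheeger-type expansion arguments, which need $p(1+h)>1$, give literally nothing there. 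Non-amenability has to enter instead through the $\ell^2$ estimate on the non-backtracking kernel, $\pnbw^n(x,y) \le \rho^n/(1-\rho)$ (Lemma \ref{nbwradius} of this note, taken from \cite{BNP}), whose Green-type summation $\sum_{n\ge0} \rho^n/(1-\rho) = (1-\rho)^{-2}$ is the actual source of the exponent $2$ inside the logarithm --- your attribution of that exponent to a generic ``second-moment/$\ell^2$ estimate'' points in the right direction but is never instantiated.

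There is also a quantitative flaw in the iteration scheme itself that your sketch does not register: the probability that a single seed vertex produces a successful further sweep of depth $g/2$ is only $\Theta(\eps) = \Theta(\log(1+(1-\rho)^{-2})/g)$, which tends to $0$ as $g \to \infty$. Hence a seed whose size is a \emph{fixed} polynomial in $(1-\rho)^{-1}$, independent of $g$, sees every one of its per-seed continuations fail with probability tending to $1$, and your ``expansion lemma'' cannot be proved seed-by-seed; it must be run at the level of \emph{counts}, using that the expected number of depth-$g/2$ descendants of one seed, $(1+\eps)^{g/2} \ge (1+(1-\rho)^{-2})^{c C}$, is large even though the success probability is small, together with a second-moment bound. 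This is in fact how \cite{BNP} proceed, and they avoid renormalization across sweeps entirely: they control the first and second moments of the number $X_n$ of open non-backtracking paths of length $n$ from the origin, with $\E X_n = d(d-1)^{n-1}p^n$; in $\E X_n^2$, a pair of paths that diverges and re-merges must, by the girth hypothesis, use detours of length of order $g$, and each re-merging event is charged, via the kernel bound above, a factor on the order of $[(1+\eps)^2\rho]^{g/2}(1-\rho)^{-1}$, which is summable over merge patterns exactly when $g$ exceeds a constant multiple of $\log(1+(1-\rho)^{-2})$ relative to the rate $\log(1/\rho)$ --- this is where the theorem's numerology comes from. Paley--Zygmund then gives $\prob_p(0 \lr S_n) \ge c(\rho,d) > 0$ uniformly in $n$, hence $\theta(p)>0$ and $p_c \le p$. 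In short: your outline names the correct ingredients (girth-scale tree approximation, spectral control of path collisions) but the hard estimates are deferred to assertions, and the tool you do invoke to discharge them is insufficient in this regime of $p$.
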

In particular, the statement of the theorem above implies that for any $\rho<1$ there exists $L>0$ such that if $G$ is a regular graph with spectral radius at most $\rho$ and girth at least $L$ we have
\be \label{perccond} p_c (d-1) \rho < 1 \, . \ee
In fact, we will prove the assertions of Theorems \ref{percthm}, \ref{sawthm}, \ref{percolationexponents} and \ref{sawexponents} under the assumption (\ref{perccond}), and so it will always suffice to choose
 $$ L = {C \log (1+ (1-\rho)^{-2}) \over \rho^{-1} - 1} \, ,$$
so that (\ref{perccond}) holds.

The non-backtracking random walk will be a useful tool in the proofs. Recall that this walk is simply the simple random walk not allowed to traverse back on an edge it just walked on, see the formal definition in Chapter 6 of \cite{LP}. We write $\pnbw^n(x,y)$ for the probability that the non-backtracking walk starting at $x$ visits $y$ at time $n$. Next we state two simple bounds relating $\rho$ (defined for the simple random walk) with the kernel $\pnbw$. We remark that much more precise estimates are known, but using them will only improve the possible choice of $L$ in our theorems by a multiplicative constant.

\begin{lemma} \label{nbwvssrw} For any graph $G$, vertices $x,y$ and $n \geq 0$ we have
$$ \pnbw^n(x,y) \leq \sum_{j \geq n} \psrw^j(x,y) \, .$$
\end{lemma}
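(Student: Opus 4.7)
The plan is to construct a natural coupling between the SRW and the NBW via a path reduction, and then deduce the inequality from a stopping-time bound. Given a SRW trajectory $(W_0, W_1, \ldots)$ starting at $x$, let the \emph{reduced walk} $R_t$ be the unique non-backtracking path obtained from $(W_0, \ldots, W_t)$ by iteratively cancelling every triple of consecutive vertices of the form $v,w,v$. Then $R_t$ begins at $x$, ends at $W_t$, and its length $|R_t|$ changes by $\pm 1$ at each step: it increases (an \emph{extension}) when the SRW takes a non-backtracking step, and decreases (a \emph{reduction}) when the SRW retraces its last edge; at $|R_t|=0$ only extensions are possible. Define the stopping time $T_n = \inf\{t \geq 0 : |R_t| = n\}$, which is almost surely finite.

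The heart of the matter is to show that $R_{T_n}$ has the distribution of the NBW at step $n$ from $x$. The key observation is that at an extension from reduced walk $(v_0,\ldots,v_k)$, the SRW's next vertex---conditional on being an extension---is uniformly distributed over $N(v_k)\setminus\{v_{k-1}\}$ (or over $N(v_0)$ when $k=0$), matching the NBW transition exactly. To secure the distributional identity, I would condition on the full length process $(|R_s|)_{s\leq t}$: for each $1\leq i \leq |R_t|$ let $\sigma_i \leq t$ be the latest time at which the length climbed from $i-1$ to $i$. One checks that $\sigma_1 < \sigma_2 < \cdots < \sigma_{|R_t|}$ (between $\sigma_j$ and $t$ the length stays $\geq j$, so no lower-level climb can intervene), and that the reduced walk just before $\sigma_i$ ends in the vertices $v_{i-2}, v_{i-1}$ produced at $\sigma_{i-2}, \sigma_{i-1}$; hence the vertex $v_i$ added at $\sigma_i$ is uniformly distributed over $N(v_{i-1})\setminus\{v_{i-2}\}$. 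Thus, conditionally on the length process, $R_t$ has the NBW law on paths of length $|R_t|$, independent of the length process itself. Specializing to $t=T_n$ yields the claim.

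Granting the claim, the endpoint of $R_{T_n}$ is $W_{T_n}$, so
\begin{equation*}
\pnbw^n(x,y) \;=\; \prob(W_{T_n} = y) \;=\; \sum_{j \geq n} \prob(T_n = j,\, W_j = y) \;\leq\; \sum_{j \geq n} \prob(W_j = y) \;=\; \sum_{j \geq n} \psrw^j(x,y),
\end{equation*}
using only $T_n \geq n$ deterministically. The main technical obstacle is the identification of the law of $R_{T_n}$ with that of the NBW; once this is in hand, the final estimate is a one-line summation.
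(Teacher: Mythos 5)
Your argument is the paper's proof written out in full: the paper compresses the same backtrack-erasure coupling and the deterministic bound $T_n\geq n$ into two sentences, while you supply the distributional identification of $R_{T_n}$ with the non-backtracking walk via the last-ascent times $\sigma_i$, which is exactly the detail the paper leaves implicit. One caveat, shared with the paper's own sketch: the assertion that the reduced path is independent of the length process uses that the backtracking probability $1/\deg(W_t)$ is constant, i.e.\ that $G$ is regular --- on a non-regular graph, conditioning on (say) $\{T_2=2\}$ biases the first step toward high-degree neighbours and $R_{T_n}$ is no longer distributed as the non-backtracking walk --- so the lemma's ``any graph'' should really read ``any regular graph'', which is all the paper ever uses.
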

\begin{proof} The non-backtracking random walk trace can be obtained from the simple random walk by sequentially erasing backtrack moves. In this coupling, if the non-backtracking walk visits $y$ at time $n$, then the simple random walk must have visited $y$ at some time which is at least $n$.
\end{proof}

\begin{lemma}\label{nbwradius} Let $G$ be an infinite graph with spectral radius $\rho < 1$. Then
$$ \pnbw^n(x,y) \leq {\rho ^n \over 1-\rho} \, .$$
\end{lemma}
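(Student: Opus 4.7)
The strategy is to combine Lemma~\ref{nbwvssrw} with the classical pointwise spectral bound on the simple random walk transition kernel. The first step would be to apply Lemma~\ref{nbwvssrw}, which reduces the task to bounding the tail
\[
\pnbw^n(x,y) \leq \sum_{j \geq n} \psrw^j(x,y)
\]
by a geometric series with ratio $\rho$.

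The second step is to invoke the standard pointwise estimate $\psrw^j(x,y) \leq \rho^j$. I would obtain this from reversibility: Cauchy--Schwarz gives $\psrw^j(x,y) \leq \sqrt{\psrw^j(x,x)\,\psrw^j(y,y)}$, and the spectral theorem applied to the self-adjoint transition operator (whose $\ell^2$ operator norm equals $\rho$) yields $\psrw^j(z,z) \leq \rho^j$ at every vertex $z$. Plugging this in and summing,
\[
\pnbw^n(x,y) \leq \sum_{j \geq n} \rho^j \;=\; \frac{\rho^n}{1-\rho},
\]
which is exactly the asserted bound.

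The main (and only) obstacle is the pointwise inequality $\psrw^j(x,y) \leq \rho^j$, which is classical (see Chapter~10 of \cite{Woess}) but worth citing explicitly. For non-regular graphs one would in principle pick up a factor $\sqrt{\deg(y)/\deg(x)}$ coming from the degree-weighted inner product, but this factor disappears in every regime in which Theorems~\ref{percthm}--\ref{sawexponents} are later applied, since there the graph is assumed regular or transitive. Beyond citing this standard spectral fact, the proof is a two-line chain of inequalities followed by summing a geometric series.
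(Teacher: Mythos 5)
Your proof is correct and is essentially identical to the paper's: the authors also cite the classical bound $\psrw^j(x,y)\leq\rho^j$ from \cite{Woess}, combine it with Lemma~\ref{nbwvssrw}, and sum the geometric series. Your remark about the $\sqrt{\deg(y)/\deg(x)}$ factor for non-regular graphs is a reasonable extra precaution (the paper glosses over it, and indeed all applications are to regular or transitive graphs), but it does not change the argument.
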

\begin{proof}
It is classical that $\psrw^j(x,y) \leq \rho^j$ for all $j \geq 0$, see \cite{Woess}. This and Lemma \ref{nbwvssrw} yields the statement.
\end{proof}

\subsection{Percolation: proofs of Theorems \ref{percthm} and \ref{percolationexponents}} For an integer $n>0$ we write $\{x \stackrel{n}{\lrfill} y\}$ for the event that the shortest open path between $x$ and $y$ is of length $n$, so that $\displaystyle \prob(x \lr y) = \sum_{n=0}^{\infty} \prob(x \stackrel{n}{\lrfill} y)$. For any $p \in [0,1]$ we bound
\be\label{connbound} \prob_p(x \stackrel{n}{\lrfill} y) \leq d(d-1)^{n-1} \pnbw^n(x,y) p^n \, ,\ee
since $d(d-1)^{n-1} \pnbw^n(x,y)$ is an upper bound on the number of simple paths of length precisely $n$ between $x$ and $y$. Lemma \ref{nbwradius} implies that
$$ \prob_p(x \stackrel{n}{\lrfill} y) \leq {d [p(d-1)\rho]^n\over (d-1)(1-\rho)}  \, .$$
Hence, if $p$ is such that $p(d-1)\rho <1$ and $x,y$ are two vertices of graph distance $R$ in $G$, then
$$ \prob_p(x \lr y) = \sum_{n \geq R} \prob_p(x \stackrel{n}{\lrfill} y) \leq C [p(d-1)\rho]^R \, ,$$
where $C=C(d,\rho)>0$ is a constant. In particular $\prob_p(x \lr y)$ tends to $0$ as the graph distance in $G$ of $x$ and $y$ grows. Now, assume that (\ref{perccond}) holds. Fix $p>p_c$ so that $p(d-1)\rho<1$ and write $\theta(p) = \prob_p(|\C(0)|=\infty)>0$ for the percolation probability. By the Harris inequality we get that for any two vertices $x,y$ we have
$$ \prob_p(|\C(x)|= \infty \and |\C(y)|=\infty \and x \not \lr y) \geq \theta(p)^2 - \prob_p(x \lr y) \, .$$
We now choose $x,y$ with graph distance $R$ so large so that the last quantity is positive. Theorem 7.5 in \cite{LP} states that the number of infinite clusters is constant almost surely, and is $0,1$ or infinity. This shows that $p_c < p_u$, concluding the proof of Theorem \ref{percthm}.

We now turn to proving Theorem \ref{percolationexponents}. We use (\ref{connbound}) to bound the triangle diagram
$$ \sum_{x,y} \prob_{p_c}(0 \lr x) \prob_{p_c}(x \lr y) \prob_{p_c}(y \lr 0) $$
by
$$ \sum_{r_1,r_2,r_3=0}^\infty {d^3 \over (d-1)^3}  [p_c(d-1)]^{r_1+r_2+r_3} \sum_{x,y} \pnbw^{r_1}(0,x)\pnbw^{r_2}(x,y) \pnbw^{r_3}(y,0) \, .$$
Lemma \ref{nbwvssrw} gives
\begin{eqnarray*} \sum_{x,y} \pnbw^{r_1}(0,x)\pnbw^{r_2}(x,y) \pnbw^{r_3}(y,0) &\leq& \sum_{x,y} \sum_{\substack{n_1 \geq r_1, \\ n_2 \geq r_2, \\ n_3 \geq r_3}} \psrw^{n_1}(0,x)\psrw^{n_2}(x,y)\psrw^{n_3}(y,0) \\ &=& \sum_{\substack{n_1 \geq r_1, \\ n_2 \geq r_2, \\ n_3 \geq r_3}} \psrw^{n_1+n_2+n_3}(0,0) \leq {\rho ^{r_1+r_2+r_3} \over (1-\rho)^3} \, .\end{eqnarray*}
We get that
$$ \sum_{x,y} \prob_{p_c}(0 \lr x) \prob_{p_c}(x \lr y) \prob_{p_c}(y \lr 0) \leq C \sum_{r_1,r_2,r_3} [p_c(d-1)\rho]^{r_1+r_2+r_3} < \infty \, ,$$
concluding the proof of Theorem \ref{percolationexponents}. \qed \\

\subsection{Self-avoiding walk: proof of Theorems \ref{sawthm} and \ref{sawexponents}.}
We begin by the well known inequality that in any transitive graph $G$ we have that $\mu p_c \geq 1$. Indeed, if $p$ is such that $\mu p < 1$, then
$$ \prob_p ( 0 \lr S_n ) \leq \sum_{k\geq n} c_k p^k \longrightarrow 0 \hbox{ {\rm as} } n \to \infty \, ,$$
where $\{0 \lr S_n\}$ is the event that there exists an open path starting at $0$ and ending at the $n$-sphere $S_n = \{x : d_G(0,x)=n\}$. Since $\cap_n \{ 0 \lr S_n\}= \{ |\C(0)| = \infty\}$, we deduce that $p \leq p_c$. Hence $\mu p_c \geq 1$.

%

In the same way we derived (\ref{connbound}) we bound
\be\label{sawconn} c_n(x)  \leq d(d-1)^{n-1} \pnbw^n(x,y) \leq {d [(d-1)\rho]^n \over (d-1) (1-\rho)} \, ,\ee
where the last inequality follows from Lemma \ref{nbwradius}. Thus for any small $\eps>0$ there exists $n_0$ such that for all $n \geq n_0$
$$ {c_n(x) \over c_n} \leq C [ (\mu^{-1} + \eps) (d-1) \rho]^n \, ,$$
where $C=C(\rho)>0$ is a constant. Assume now that (\ref{perccond}) holds, since $\mu p_c \geq 1$ we deduce that we can choose $\eps>0$ small enough so that the base of the exponent in the previous inequality is less than $1$. This concludes the first assertion of Theorem \ref{sawthm} that $\psaw^n(0,x)$ decays exponentially in $n$ uniformly in $x$. This exponential decays also establishes positive speed for the self-avoiding walk since for any $\alpha>0$ we have
$$ \sum_{x : d_G(0,x) \leq \alpha n} {c_n(x) \over c_n} \leq C(d-1)^{\alpha n} \sup_{x\in G} {c_n(x) \over c_n} \, .$$
Now choose $\alpha=\alpha(\rho,d)>0$ small enough so that the right hand side converges to $0$. This finishes the proof of Theorem \ref{sawthm}.

We now turn to prove Theorem \ref{sawexponents}. We write
$$ \sum_{x \in G} G_z^2(x) = \sum_{x \in G} \sum_{n \geq 0, m\geq 0} c_n(x)c_m(x) z^{n+m} \, ,$$
which is valid as long as the sums converge. We use the first estimate in (\ref{sawconn}) to bound
$$ \sum_{x \in G} G_z^2(x) \leq C \sum_{n \geq 0, m\geq 0} [z(d-1)]^{n+m} \sum_{x} \pnbw^n(0,x) \pnbw^m(0,x) \, .$$
By Lemma \ref{nbwvssrw} we obtain the bound
\begin{eqnarray*}
\sum_{x \in G} G_z^2(x) &\leq& C \sum_{n \geq 0, m\geq 0} [z(d-1)]^{n+m} \sum_{n_1 \geq n, m_1 \geq m} \sum_{x} \psrw^{n_1}(0,x) \psrw^{m_1}(0,x) \\ &\leq& C \sum_{n \geq 0, m\geq 0} [z(d-1)]^{n+m} \sum_{n_1 \geq n, m_1 \geq m} \psrw^{n_1+m_1}(0,0) \\ &\leq& C(1-\rho)^{-2} \sum_{n \geq 0, m\geq 0} [z(d-1) \rho]^{n+m} \, . \end{eqnarray*}
Now, when (\ref{perccond}) holds, since $\mu p_c \geq 1$ we find that
$$ \sum_{x\in G} G_{\mu^{-1}}^2(x) < \infty \, ,$$
concluding the proof of Theorem \ref{sawexponents}. \qed\\

%
%
%

\vspace{.05 in}\noindent
{\bf Asaf Nachmias}: \texttt{asafn(at)math.ubc.ca} \\
Department of Mathematics, University of British Columbia, 121-1984 Mathematics Rd,
Vancouver, BC, Canada V6T1Z2.

\medskip \noindent
{\bf Yuval Peres}: \texttt{peres(at)microsoft.com} \\
Microsoft Research, One Microsoft way, Redmond, WA 98052-6399, USA.

\end{document}